\newcommand{\mylabel}[2]{#2\def\@currentlabel{#2}\label{#1}}
\newcommand{\bsm}{\left(\begin{smallmatrix}}
\newcommand{\esm}{\end{smallmatrix}\right)}
\newenvironment{customthm}[1]
  {\innercustomthm}
  {\endinnercustomthm}
\newtheorem{theorem}{Theorem}[section]
\newtheorem{corollary}[theorem]{Corollary}
\newtheorem{lemma}[theorem]{Lemma}
\newtheorem{proposition}[theorem]{Proposition}
\theoremstyle{definition}
\newtheorem{definition}[theorem]{Definition}
\newtheorem{remark}[theorem]{Remark}
\newtheorem*{claim*}{Claim}
\newcommand{\Z}{\mathbb{Z}}
\newcommand{\F}{\mathbb{F}}
\newcommand{\Hom}{\operatorname{Hom}}
\newcommand{\GL}{\operatorname{GL}}
\newcommand{\SL}{\operatorname{SL}}
\newcommand{\map}{\rightarrow}
\newcommand{\fund}{\pi_1(X)}
\newcommand{\zpi}{{\Z[\fund]}}
\newcommand{\pair}{(X,Y)}
\newcommand{\pairm}{(X,Y;M)}
\newcommand{\pairtil}{(\widetilde{X},\widetilde{Y})}
\newcommand{\xl}{X_L}
\newcommand{\xlpr}{X_{L'}}
\newcommand{\xlpair}{{(\xlpr,\xl)}}
\newcommand{\rk}{\mbox{rk}}
\newcommand{\dd}[1]{#1\dots#1}
\newcommand{\GamTen}{\widetilde{\Gamma}\otimes}
\newtheorem{case}{Case}
\newtheorem{subcase}{Case}
\numberwithin{subcase}{case}
\title{A Torres formula for twisted Reidemeister torsion}
\author{Peter Seokhee Seong}
\email{peterseong@utexas.edu}
\begin{document}

\begin{abstract}
The Torres formula, which relates the Alexander polynomial of a link to the Alexander polyomial of its sublinks, admits a generalization to the twisted setting due to Morifuji.
This paper uses twisted Reidemeister torsion to obtain a second proof of Morifuji's result that is closer in appearance to Torres' original formula.
%further generalize Morifuji's result. 
\end{abstract}

\maketitle

\section{Introduction}

The \emph{Torres formula} relates the multivariable Alexander polynomial~$\Delta_L$ of a~$\mu$-component ordered link~$L =K_1 \cup \ldots \cup K_\mu \subset S^3$ to the multivariable Alexander polynomial of the sublink~$L':=L \setminus K_\mu$:
\[
\displaystyle
\Delta_L(t_1,\ldots,t_{\mu-1},1)\doteq
\begin{dcases}
\dfrac{t_1^{\ell k(K_1,K_2)}-1}{t_1-1}\Delta_{L'}(t_1)  &\quad \text{if } \mu=2 \\
(t_1^{\ell k(K_1,K_\mu)}\cdots t_{\mu-1}^{\ell k(K_{\mu-1},K_\mu)}-1)\Delta_{L'}(t_1,\ldots,t_{\mu-1}) &\quad \text{if } \mu>2.
\end{dcases}
\]
Torres' original proof made use of Fox calculus~\cite{Torres} but there are now also proofs involving Reidemeister torsion~\cite{TuraevReidemeister} and Seifert surfaces~\cite{CimasoniStudying}.

More recently, the Alexander polynomial has been generalized to include the additional data of a representation~$\rho \colon \pi_1(S^3 \setminus L) \to \GL(n,R)$ of the link group.
The resulting \emph{twisted Alexander polynomials}, originally due to Lin~\cite{Lin} and Wada~\cite{Wada},  were later reformulated using Reidemeister torsion~\cite{Kitano,KirkLivingston} and have been successfully applied to several areas of link theory; we refer to~\cite{FriedlVidussi} for a survey.
In particular,  Morifuji used Fox calculus to prove that twisted Alexander polynomials satisfy a Torres condition~\cite{Morifuji}.
Reformulated in terms of twisted Reidemeister torsion, Morifuji's result states that, given a field~$\F$ and a representation~$\rho ' \colon \pi_1(S^3 \setminus L') \to \SL(n,\F)$,  there are elements~$\varepsilon_{k,\rho'} \in \F$ for~$k=1,\ldots,n-1$ such that
$$ \tau_L^\rho(t_1,\ldots,t_{\mu-1},1)\doteq\left( 
(t_1^{\ell_1}\cdots t_{\mu-1}^{\ell_{\mu-1}})^n 
+\sum_{k=1}^{n-1} \varepsilon_{k,\rho'} (t_1^{\ell_1}\cdots t_\mu^{\ell_{\mu-1}})^{n-k}
+(-1)^n\right)
\cdot\tau_{L'}^{\rho'}(t_1,\ldots,t_{\mu-1})~$$ 
where~$\ell_i:=\ell k(K_i,K_\mu)$ and~$\rho \colon \pi_1(S^3\setminus L)\to \pi_1(S^3 \setminus L') \to \SL(n,\F)$ is obtained by precomposing~$\rho'$ with the inclusion induced map.

The goal of this paper is to use twisted Reidemeister torsion to obtain a version of Morifuji's theorem 
%which allows for more general representations.
%The statement is also 
that is closer in appearance to Torres' original formula.
Continuing with the notation above and using \emph{UFD} as an abbreviation for ``Unique Factorization Domain"  our main result reads as follows.

\begin{theorem}
\label{thm:MainIntro}
Let~$R$ be a Noetherian UFD,  let~$\rho' \colon \pi_1(S^3 \setminus L') \to \GL(n,R)$ be a representation, let~$\rho \colon \pi_1(S^3\setminus L)\to \pi_1(S^3 \setminus L') \xrightarrow{\rho'} \GL(n,R)$ be the composition of~$\rho'$ with the inclusion induced map,   let~$[K_\mu] \in \pi_1(S^3 \setminus L')$ denote any based homotopy class of~$K_\mu$ in the complement of~$L'$.

The twisted Reidemeister torsion of~$(L,\rho)$ satisfies
$$ \tau_L^\rho(t_1,\ldots,t_{\mu-1},1)
= \det\left(t_1^{\ell_1}\cdots t_{\mu-1}^{\ell_{\mu-1}}\rho'([K_\mu])-I_n\right) \cdot \tau_{L'}^{\rho'}(t_1,\ldots,t_{\mu-1})
$$
up to multiplication by units of $R[t_1^{\pm 1},\ldots,t_{\mu-1}^{\pm 1}].$
\end{theorem}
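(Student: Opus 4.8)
The plan is to twist Turaev's torsion-theoretic proof of the classical Torres formula \cite{TuraevReidemeister}. Its geometric input is that the exterior $X_{L'}:=S^3\setminus\nu(L')$ is recovered from $X_L:=S^3\setminus\nu(L)$ by regluing the solid torus $\nu(K_\mu)$ along the boundary torus $T_\mu:=\partial\nu(K_\mu)$, with the meridian $\mu_\mu$ of $K_\mu$ bounding a disc inside $\nu(K_\mu)$. Consequently $\pi_1(X_{L'})=\pi_1(X_L)/\langle\!\langle\mu_\mu\rangle\!\rangle$, and the core of $\nu(K_\mu)$ -- which inside $X_{L'}$ is isotopic to $K_\mu$ and to the $0$-framed longitude $\lambda_\mu\subset T_\mu$ -- represents the class $[K_\mu]$.

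First I would reinterpret both sides as Reidemeister torsions with coefficients in $\Lambda:=R[t_1^{\pm1},\dots,t_{\mu-1}^{\pm1}]$. By definition $\tau_L^\rho$ is computed from $C_*(X_L)$ twisted by $g\mapsto\rho(g)\,t_1^{\phi_1(g)}\cdots t_\mu^{\phi_\mu(g)}$, with $\phi$ the abelianization; substituting $t_\mu=1$ produces the system $g\mapsto\rho(g)\,t_1^{\phi_1(g)}\cdots t_{\mu-1}^{\phi_{\mu-1}(g)}$, which sends $\mu_\mu$ to $I_n$ since $\rho$ factors through $\pi_1(X_{L'})$, and hence is pulled back along $\pi_1(X_L)\to\pi_1(X_{L'})$ from the system $\alpha:=\rho'\otimes\phi'$ on $X_{L'}$ that defines $\tau_{L'}^{\rho'}$. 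Thus $\tau_L^\rho(t_1,\dots,t_{\mu-1},1)\doteq\tau(X_L;\alpha|_{X_L})$ and $\tau_{L'}^{\rho'}(t_1,\dots,t_{\mu-1})\doteq\tau(X_{L'};\alpha)$. Multiplicativity of Reidemeister torsion applied to the decomposition $X_{L'}=X_L\cup_{T_\mu}\nu(K_\mu)$ then yields, up to units of $\Lambda$,
\[
\tau(X_L;\alpha|_{X_L})\;\doteq\;\tau(X_{L'};\alpha)\cdot\tau(T_\mu;\alpha)\cdot\tau(\nu(K_\mu);\alpha)^{-1}.
\]

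It remains to compute the two new factors by hand. Set $B:=t_1^{\ell_1}\cdots t_{\mu-1}^{\ell_{\mu-1}}\rho'([K_\mu])$; then $\alpha$ sends the core of $\nu(K_\mu)$ to $B$, because its class in $H_1(X_{L'})=\Z^{\mu-1}$ records the linking numbers $\ell_i=\ell k(K_i,K_\mu)$, and it sends the pair $(\mu_\mu,\lambda_\mu)\in\pi_1(T_\mu)$ to $(I_n,B)$. Using the obvious cell structures, $\nu(K_\mu)\simeq S^1$ gives $\tau(\nu(K_\mu);\alpha)\doteq\det(B-I_n)^{\pm1}$, while on $T_\mu$ the generator $\mu_\mu$ acts trivially, so $T_\mu$ behaves like $S^1\times S^1$ with one acyclic factor and an Euler-characteristic argument forces $\tau(T_\mu;\alpha)\doteq1$. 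Substituting back, and observing that $\det(B-I_n)=\det\!\big(t_1^{\ell_1}\cdots t_{\mu-1}^{\ell_{\mu-1}}\rho'([K_\mu])-I_n\big)$, gives the asserted formula; the answer is independent of the based class $[K_\mu]$, which enters only through this conjugation-invariant determinant.

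The main obstacle is the homological bookkeeping behind these reductions. One must check that $T_\mu$ and $\nu(K_\mu)$ are $\alpha$-acyclic over the quotient field of $\Lambda$ -- equivalently that $\det(B-I_n)\neq0$, which is automatic whenever some $\ell_i\neq0$ but may fail when all linking numbers vanish and $\rho'([K_\mu])$ has $1$ as an eigenvalue -- then deduce $\alpha$-acyclicity of $X_{L'}$ from that of $X_L$ via the Mayer--Vietoris sequence, confirm that this sequence is itself acyclic so that no correction term enters the multiplicativity identity, and justify that the substitution $t_\mu=1$ commutes with forming the torsion. The degenerate case $\det(B-I_n)=0$ needs a short separate argument that both sides then vanish. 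Finally, the exponent $\pm1$ on $\tau(\nu(K_\mu);\alpha)$ and the signs throughout must be reconciled with the normalisations of Reidemeister torsion and of the longitude framing fixed earlier in the paper.
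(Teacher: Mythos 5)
Your proposal is correct and follows the same overall architecture as the paper's proof: evaluate at $t_\mu=1$ using the functoriality of torsion under the induced change of coefficients (this is exactly Lemma~\ref{evalutation lemma}, which disposes of your worry about whether the substitution commutes with forming the torsion), then apply multiplicativity of torsion to a decomposition relating $X_L$ and $X_{L'}$, and finally compute the correction factor. Where you differ is in how that factor is produced. You cut $X_{L'}$ along $T_\mu=\partial\nu K_\mu$ and evaluate $\tau(\nu K_\mu)$ and $\tau(T_\mu)$ directly from explicit CW structures; the paper instead uses the short exact sequence of the pair $(X_{L'},X_L)$, identifies $H_*(X_{L'},X_L)$ via excision, Poincar\'e duality for $\overline{\nu}K_\mu$ and the (co)invariants description of $H_0$ and $H^0$ (Lemma~\ref{homology of pair}), and converts the order $\Delta(H_2(X_{L'},X_L))=\det(T\rho'(K_\mu)-I_n)$ into the relative torsion via Proposition~\ref{torsion-order}. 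The two computations are equivalent, since by excision $\tau(X_{L'},X_L)\doteq\tau(\nu K_\mu,T_\mu)\doteq\tau(\nu K_\mu)/\tau(T_\mu)$; your version is more hands-on, the paper's duality argument has the advantage of simultaneously producing the rank information needed in the degenerate case. The loose ends you flag are genuine and are exactly what the paper settles: for $\det(B-I_n)=0$ one shows the left side vanishes because $H_2(X_L)$ has positive rank (in your setup, $H_3(X_{L'})=0$ and $H_2(\nu K_\mu)=0$ force the positive-rank module $H_2(T_\mu)\cong\ker(B-I_n)$ to inject into $H_2(X_L)$ in Mayer--Vietoris); one must also treat the subcase where $\tau_{L'}^{\rho'}=0$ with $\det(B-I_n)\neq 0$; and the exponent on $\tau(\nu K_\mu)$ must come out as $-1$ in Turaev's convention, which matters here because $\det(B-I_n)$ is not a unit of $R[t_1^{\pm 1},\ldots,t_{\mu-1}^{\pm 1}]$, so the sign of that exponent cannot be absorbed into the indeterminacy.
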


When~$n=1,\rho'$ is the trivial representation and $R=\Z$, Theorem~\ref{thm:MainIntro} recovers the Torres formula for the (untwisted) Alexander polynomial whereas, as we explain in Corollary~\ref{mori},  in the case of~$\SL(n,\F)$ representations, it recovers Morifuji's formula, albeit with a less refined indeterminacy.

\begin{remark}
\label{rem:Indeterminacy}
Morifuji's twisted Torres formula for $\SL(n,\F)$-representations holds up to multiplication by $\pm t_1^{nk_1}\cdots t_\mu^{nk_\mu}$, whereas ours only holds up to multiplication by~$x \cdot t_1^{k_1}\cdots t_\mu^{k_\mu}$ with~$x \in \F^\times.$
%On a related note 
We believe that with a little more effort, the equality in Theorem~\ref{thm:MainIntro} could be shown to hold up to multiplication by elements of the form~$\pm \det(\rho'(g))t_1^{nk_1}\cdots t_\mu^{nk_\mu}$ with~$g\in \pi_1(S^3 \setminus L').$
%Also perhaps just domains because don't have to use orders.
\end{remark}

\begin{remark}
% We comment on the choice of based homotopy class~$[K_\mu] \in \pi_1(S^3 \setminus L')$ mentioned in Theorem~\ref{thm:MainIntro}. The based-free homotopy class of $K_\mu$ in $[S^1,\xlpr]$ is uniquely determined which corresponds to the conjugacy class of $\pi_1(\xlpr)$. Therefore, its image via $\rho'$ is a conjugacy class of $\GL(n,R)$ which uniquely determines its characteristic polynomial where the variable is $t_1^{\ell_1}\cdots t_{\mu-1}^{\ell_{\mu-1}}$.
The proof of Theorem~\ref{thm:MainIntro} shows that~$\det\left(t_1^{\ell_1}\cdots t_{\mu-1}^{\ell_{\mu-1}}\rho'([K_\mu])-I_n\right)$ does not depend on the based homotopy class of $K_\mu$ but this can also be verified directly by using the conjugacy invariance of the determinant. 
% We comment on the choice of based homotopy class~$[K_\mu] \in \pi_1(S^3 \setminus L')$ mentioned in Theorem~\ref{thm:MainIntro}.
% It is possible to choose any based homotopy class because different choices of paths from the basepoint of~$S^3 \setminus L'$ to~$K_\mu$ lead to conjugate elements~$g,g' \in \pi_1(S^3 \setminus L')$ and the conjugacy invariance of the determinant then implies that~$\det(T\rho(g)-I_n)=\det(T\rho(g')-I_n)$, where $T:=t_1^{\ell_1}\cdots t_{\mu-1}^{\ell_{\mu-1}}.$
% % This argument also shows that $\det(T\rho'([K_\mu])-I_n))$ only depends on the free homotopy class of $K_\mu$ in $X_{L'}$.
% % This can also be seen by noting that $\det(T\rho'([K_\mu])-I_n))$ is the characteristic polynomial of $\rho'([K_\mu])$.
\end{remark}

\subsection*{Organization}

Section~\ref{sec:Background} reviews some background material including the definition of Reidemeister torsion and Alexander polynomials.
Section~\ref{sec:Proof} focuses on the proof of Theorem~\ref{thm:MainIntro} and shows how it recovers Morifuji's theorem. 

\subsection*{Acknowledgments}

Thanks go to Anthony Conway for suggesting this project and for his comments on prior drafts of this article.

\subsection*{Conventions}

Unless mentioned otherwise, spaces are assumed to be path-connected and to have the homotopy type of a finite CW complex.
Manifolds are assumed to compact,  connected and oriented.
The unit group of a ring~$R$ is denoted by~$R^\times$ and the field of fractions of a domain $S$ is denoted by~$\widetilde{S}.$

\section{Background}

This section collects some background material needed to prove Theorem~\ref{thm:MainIntro}.
Section~\ref{sub:Twisted} reviews twisted (co)homology, Section~\ref{sub:Reidemeister} is concerned with the Reidemeister torsion of a chain complex, whereas Sections~\ref{sub:twistedAlexanderpolynomials} and~\ref{sub:TwistedReidemeister} focus on twisted Alexander polynomials and twisted Reidemeister torsion respectively.
 
\label{sec:Background}
\subsection{Twisted (co)homology}
\label{sub:Twisted}
This section reviews twisted homology and cohomology.
Our treatment follows~\cite{ConwayThesis}, but many more details on this material can be found in~\cite{FriedlLectureNotes}.
\medbreak

 Let~$X$ be a space and let~$Y\subset X$ be a possibly empty subspace. Denote by~$p:\widetilde{X}\map\ X$ the universal cover of~$X$ and set~$\widetilde{Y}:=p^{-1}(Y)$. The left action of~$\pi_1(X)$ by deck transformations on~$\widetilde{X}$ endows the singular chain complex~$C_*\pairtil$ with the structure of a left~$\zpi$-module. Let~$R$ be a ring and~$M$ be a~$(R,\zpi)$-bimodule. 
 We write~$\overline{C_*\pairtil}$ for the right~$\zpi$-module whose underlying abelian group agrees with~$C_*\pairtil$ but where the action is by~$\sigma \cdot g=\overline{g}\sigma$ for~$\sigma \in C_*\pairtil$ and~$g \in \pi_1(X)$.

\begin{definition}
\label{def:TwistedHomology}
    The chain complex~$C_*\pairm:=M\otimes_\zpi C_* \pairtil$ of left~$R$-modules is called the \emph{twisted chain complex} of~$\pair$ with coefficients in~$M$. The corresponding homology left~$R$-modules~$H_*\pairm$ are called the \emph{twisted homology modules} of~$\pair$ with coefficients in~$M$. The cochain complex~$C^*\pairm:=\Hom_{\text{right-}\zpi}(\overline{C_*\pairtil},M)$ of left~$R$-modules is called the \emph{twisted cochain complex} of~$\pair$ with coefficients in~$M$. The corresponding cohomology left~$R$-modules~$H^*\pairm$ are called the \emph{twisted cohomology modules} of~$\pair$ with coefficients in~$M$.
\end{definition}

If~$Y=\emptyset$, then we write~$H_*(X;M)$ and~$H^*(X;M)$ instead of~$H_*(X,\emptyset;M)$ and~$H^*(X,\emptyset;M)$. 
The isomorphism type of twisted (co)homology is independent of the choice of the basepoint in~$X$ for~$\fund$. 
\begin{remark}
\label{rem:CWTwisted}
If~$(X,A)$ is a CW pair, the same definition can be made using cellular (co)chain complexes and the resulting (co)chain complexes are chain homotopy equivalent~\cite[Proposition A.3]{friedl2024surveyfoundationsfourmanifoldtheory}. In the sequel, these identifications will be made implicitly.
\end{remark}

Given a representation~$\rho \colon\pi_1(X)\map \GL(n,R)$, we define a right action of~$\pi_1(X)$ on~$R^n$ by
\[
x\cdot g := \rho(g)^{-1}x
\] 
for each~$g\in \pi_1(X)$ and~$x\in R^n$. 
Here we think of elements of~$R^n$ as column vectors.
This action endows~$R^n$ with the structure of a right~$\Z[\pi_1(X)]$-module.
It follows that~$R^n$ is a~$(R,\Z[\pi_1(X)])$-bimodule which will be denoted by~$(R^n,\rho)$. 
For convenience, we write
\begin{align*}
    C_*(X,Y;R^n,\rho)&:=C_*(X,Y;(R^n,\rho)) &
    H_*(X,Y;R^n,\rho)&:=H_*(X,Y;(R^n,\rho)) \\
    C^*(X,Y;R^n,\rho)&:=C^*(X,Y;(R^n,\rho)) &
    H_*(X,Y;R^n,\rho)&:=H^*(X,Y;(R^n,\rho)).
\end{align*}
We refer to \cite[Lemma 2.7]{FrKim} for a proof of the following lemma.

\begin{lemma}\label{chi-twisted}
    If~$X$ is an~$m$-manifold and~$R$ is a domain, then \[\sum_{i=0}^m (-1)^i \rk_R(H_i(X;R^n,\rho))=n\cdot\chi(X).\]
\end{lemma}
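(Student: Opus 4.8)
The plan is to extend scalars to the field of fractions~$\widetilde{R}$ and exploit the fact that the Euler characteristic of a bounded complex of finite-dimensional vector spaces is unchanged upon passing to homology. Since, by our standing conventions,~$X$ has the homotopy type of a finite CW complex, and both sides of the asserted equality depend only on~$\pi_1(X)$ and~$\rho$ up to homotopy, I would first reduce to the case where~$X$ itself is a finite CW complex (replacing~$\rho$ by its pullback along a homotopy equivalence changes neither~$\rk_R H_i$ nor~$\chi$). Fix such a structure and let~$c_i$ be the number of~$i$-cells, so that~$\chi(X)=\sum_{i=0}^m(-1)^ic_i$.

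With respect to this CW structure, the cellular chain complex~$C_*(\widetilde{X})$ of the universal cover is a bounded complex of free left~$\Z[\pi_1(X)]$-modules with~$C_i(\widetilde{X})\cong\Z[\pi_1(X)]^{c_i}$. Hence~$C_i(X;R^n,\rho)=(R^n,\rho)\otimes_{\Z[\pi_1(X)]}C_i(\widetilde{X})$ is a free~$R$-module of rank~$nc_i$, and the complex~$C_*(X;\widetilde{R}^n,\rho):=\widetilde{R}\otimes_R C_*(X;R^n,\rho)$ consists of~$\widetilde{R}$-vector spaces with~$\dim_{\widetilde{R}}C_i(X;\widetilde{R}^n,\rho)=nc_i$.

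Next I would use that~$\widetilde{R}$, being a localization of~$R$, is flat over~$R$, so extension of scalars commutes with homology and gives~$H_i(X;\widetilde{R}^n,\rho)\cong\widetilde{R}\otimes_R H_i(X;R^n,\rho)$; in particular~$\dim_{\widetilde{R}}H_i(X;\widetilde{R}^n,\rho)=\rk_R H_i(X;R^n,\rho)$, and this is finite because it is computed from a subquotient of a finite-dimensional vector space (so no Noetherian hypothesis on~$R$ is required). Combining this with the elementary identity~$\sum_i(-1)^i\dim_{\widetilde{R}}V_i=\sum_i(-1)^i\dim_{\widetilde{R}}H_i(V_*)$, valid for any bounded complex~$V_*$ of finite-dimensional~$\widetilde{R}$-vector spaces (a rank–nullity count), yields
\[
\sum_{i=0}^m(-1)^i\rk_R H_i(X;R^n,\rho)=\sum_{i=0}^m(-1)^i\dim_{\widetilde{R}}C_i(X;\widetilde{R}^n,\rho)=\sum_{i=0}^m nc_i(-1)^i=n\cdot\chi(X).
\]

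The only point requiring any care is the reduction in the first paragraph — arranging that~$X$ is modeled by a finite CW complex so that the twisted chain groups are finitely generated free over~$\Z[\pi_1(X)]$ and the cell count reproduces~$\chi(X)$; once this is in place (and it is guaranteed by the conventions, or alternatively by a finite handle decomposition together with Remark~\ref{rem:CWTwisted}), the rest is bookkeeping plus flatness of~$\widetilde{R}$ over~$R$.
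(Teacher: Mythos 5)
Your argument is correct and is the standard one: the paper itself gives no proof, deferring to \cite[Lemma 2.7]{FrKim}, and the cited argument is exactly this computation — the twisted chain groups are free of rank $nc_i$ over $R$, flatness of $\widetilde{R}$ identifies $\rk_R H_i$ with $\dim_{\widetilde{R}}H_i(X;\widetilde{R}^n,\rho)$, and the Euler characteristic of a bounded complex of finite-dimensional vector spaces is unchanged on passing to homology. Your attention to the reduction to a finite CW model (guaranteed by the paper's standing conventions) is the right point to flag.
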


We recall a description of the~$0^{th}$ twisted (co)homology module.

\begin{definition}\label{pi invariant}
    Let~$\pi$ be a group and let~$M$ be a~$(R,\Z[\pi])$-bimodule. Define the \emph{left~$R$-module of~$\pi$-coinvariants}  of~$M$ by 
    \[ M_\pi:=M\Bigg/ \left\lbrace\sum_{i=1}^{k}{(v_i\cdot g_i - v_i)} \ \Bigg| \ g_i\in\pi ,v_i\in M\right\rbrace \] 
    and the \emph{left~$R$-module of~$\pi$-invariants} of~$M$ by 
\[ M^\pi:= \left\lbrace v\in M \mid v\cdot g=v \text{ for all } g \in \pi \right\rbrace. \]
\end{definition}

\begin{proposition}\cite[Proposition 189.1]{FriedlLectureNotes}\label{H0}
    Let~$X$ be a path-connected space. We write~$\pi:=\pi_1(X)$. Let~$R$ be a ring and let~$M$ be a~$(R,\Z[\pi])$-bimodule. Then, we have the following isomorphisms of left~$R$-modules:
    \begin{align*}
        \displaystyle
        H_0(X;M) &\cong M_\pi \\
        \displaystyle
        H^0(X;M) &\cong M^\pi.
    \end{align*}
\end{proposition}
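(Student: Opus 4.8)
The plan is to reduce both isomorphisms to the single topological fact that the universal cover $\widetilde X$ is path-connected. Choose a CW structure on $X$ with a single $0$-cell (permissible by Remark~\ref{rem:CWTwisted}), so that the cellular chain complex of $\widetilde X$ has $C_0(\widetilde X)\cong \Z[\pi]$ as a left $\Z[\pi]$-module, and let $\varepsilon\colon \Z[\pi]\to\Z$ be the augmentation onto the trivial left $\Z[\pi]$-module. Since $H_0(\widetilde X;\Z)\cong\Z$ (as $\widetilde X$ is path-connected), the augmented complex $C_1(\widetilde X)\xrightarrow{\partial_1}\Z[\pi]\xrightarrow{\varepsilon}\Z\to 0$ is an exact sequence of left $\Z[\pi]$-modules, with $\ker(\varepsilon)=\operatorname{im}(\partial_1)$ equal to the augmentation ideal $I$ generated by the elements $g-1$, $g\in\pi$. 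This sequence is the only geometric ingredient; everything else is algebra.

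For the homology statement I would apply the right exact functor $M\otimes_{\Z[\pi]}(-)$ to this sequence. This identifies $H_0(X;M)=\operatorname{coker}(M\otimes_{\Z[\pi]}\partial_1)$ with $M\otimes_{\Z[\pi]}\Z\cong M/MI$, where $MI$ denotes the subgroup of $M$ spanned by the elements $v\cdot g - v$ with $g\in\pi$, $v\in M$ (using that $I$ is generated by the $g-1$). By Definition~\ref{pi invariant} this quotient is exactly $M_\pi$. Since the $R$- and $\Z[\pi]$-actions on $M$ commute, every map in sight is left $R$-linear, so $H_0(X;M)\cong M_\pi$ as left $R$-modules.

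For the cohomology statement, recall that $C^0(X;M)=\Hom_{\text{right-}\Z[\pi]}(\overline{C_0(\widetilde X)},M)$ and that $\delta^0$ is precomposition with $\partial_1$ (which is right $\Z[\pi]$-linear for the barred structures, being left $\Z[\pi]$-linear for the original ones). Hence $H^0(X;M)=\ker(\delta^0)$ consists of the right-$\Z[\pi]$-linear maps $f\colon\overline{C_0(\widetilde X)}\to M$ that vanish on $\operatorname{im}(\partial_1)=\ker(\varepsilon)$, i.e.\ that factor through $\overline{\Z[\pi]/I}=\overline{\Z}$, the trivial right $\Z[\pi]$-module. Thus $H^0(X;M)\cong\Hom_{\text{right-}\Z[\pi]}(\overline{\Z},M)$, and such a homomorphism is determined by the image $v$ of $1$, subject to $v\cdot g=v$ for all $g\in\pi$; that is, $v\in M^\pi$. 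The assignment $f\mapsto f(1)$ is left $R$-linear, giving $H^0(X;M)\cong M^\pi$ as left $R$-modules.

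The main thing requiring care is bookkeeping rather than any genuine obstacle: one must keep straight the left versus right $\Z[\pi]$-module structures, the effect of the bar involution $g\mapsto\overline g$ (which preserves the trivial module), and the commutation of the $R$-action with all of this, so as to conclude the isomorphisms in the category of left $R$-modules. Once a CW structure with one $0$-cell is fixed, the identifications $C_0(\widetilde X)\cong\Z[\pi]$, $M\otimes_{\Z[\pi]}\Z\cong M/MI$, and $\Hom_{\text{right-}\Z[\pi]}(\overline{\Z},M)\cong M^\pi$ are all routine, so there is nothing substantive left to do.
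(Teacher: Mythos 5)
Your argument is correct; note that the paper itself gives no proof of this proposition, only a citation to Friedl's lecture notes, so there is nothing to match against beyond the standard argument, which is exactly what you give: path-connectedness of $\widetilde X$ yields the exact augmented sequence $C_1(\widetilde X)\to C_0(\widetilde X)\xrightarrow{\varepsilon}\Z\to 0$, and then right-exactness of $M\otimes_{\Z[\pi]}-$ gives $H_0(X;M)\cong M\otimes_{\Z[\pi]}\Z\cong M_\pi$, while left-exactness of $\Hom$ gives $H^0(X;M)\cong\Hom_{\text{right-}\Z[\pi]}(\overline{\Z},M)\cong M^\pi$, with $R$-linearity coming from the bimodule structure. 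One small point: the reduction to a CW structure with a single $0$-cell is not needed and is not quite justified by Remark~\ref{rem:CWTwisted} alone (that remark only identifies cellular with singular twisted chains; replacing $X$ by a homotopy equivalent complex with one $0$-cell additionally uses homotopy invariance of twisted (co)homology, which the paper never states). Your argument goes through verbatim for the singular (or any cellular) chain complex: all you use is $\operatorname{im}(\partial_1)=\ker(\varepsilon)$ and the surjectivity of $\varepsilon$, so you can drop the single-$0$-cell step and with it the identification $C_0(\widetilde X)\cong\Z[\pi]$; the identifications $M\otimes_{\Z[\pi]}\Z\cong M_\pi$ and $\Hom_{\text{right-}\Z[\pi]}(\overline{\Z},M)\cong M^\pi$ are exactly as you describe, including the observation that the bar involution preserves the trivial module.
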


Finally, we recall the following fact from homological algebra. 
\begin{proposition}\label{tensor acyclicity-torsion}
    Let~$R$ be a domain and let~$C$ be a finite chain complex of~$R$-modules. Then, the following are equivalent:
    
    \begin{enumerate}[label=(\roman*)]
    \item~$\widetilde{R}\otimes_R C$ is acyclic.
    \item~$H_i(C)$ is~$R$-torsion for each~$i\geq 0$.
    \end{enumerate}
\end{proposition}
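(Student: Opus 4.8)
The plan is to exploit the fact that the field of fractions~$\widetilde{R}$ is a flat~$R$-module. Writing~$S:=R\setminus\{0\}$, which is a multiplicative subset of the domain~$R$, we have~$\widetilde{R}=S^{-1}R$, and localization at a multiplicative set is an exact functor. First I would record the consequence that the functor~$\widetilde{R}\otimes_R-\,\cong\,S^{-1}(-)$ commutes with taking homology: for every chain complex~$C$ of~$R$-modules and every~$i$ there is a natural isomorphism of~$\widetilde{R}$-modules
\[
H_i(\widetilde{R}\otimes_R C)\cong \widetilde{R}\otimes_R H_i(C).
\]
This is the only place flatness of~$\widetilde{R}$ is used; in particular the finiteness of~$C$ plays no essential role.

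Given this, the proposition reduces to the elementary claim that, for an~$R$-module~$M$, one has~$\widetilde{R}\otimes_R M=0$ if and only if~$M$ is~$R$-torsion. To see this I would identify~$\widetilde{R}\otimes_R M$ with the localization~$S^{-1}M$ and recall that an element~$m/s\in S^{-1}M$ vanishes precisely when~$tm=0$ for some~$t\in S$, i.e.\ precisely when~$m$ is a torsion element of~$M$. Hence~$S^{-1}M=0$ if and only if every element of~$M$ is torsion, that is, if and only if~$M$ is~$R$-torsion. No finite-generation hypothesis on~$M$ is needed here.

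Combining the two observations finishes the argument: $\widetilde{R}\otimes_R C$ is acyclic if and only if~$\widetilde{R}\otimes_R H_i(C)=0$ for all~$i$, which by the claim holds if and only if~$H_i(C)$ is~$R$-torsion for all~$i$. I do not expect a genuine obstacle in this proof; the only subtlety worth flagging is to phrase the torsion criterion at the level of individual elements, so that it applies to modules that need not be finitely generated (the terms~$C_i$, and hence the~$H_i(C)$, are not assumed to be).
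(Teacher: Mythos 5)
Your proof is correct and is the standard argument: exactness of localization at $S=R\setminus\{0\}$ gives $H_i(\widetilde{R}\otimes_R C)\cong \widetilde{R}\otimes_R H_i(C)$, and $S^{-1}M=0$ precisely when $M$ is torsion. The paper states this proposition as a recalled fact from homological algebra and gives no proof of its own, so there is nothing to contrast with; your element-wise phrasing of the torsion criterion is the right way to handle modules that need not be finitely generated.
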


\subsection{Reidemeister torsion}
\label{sub:Reidemeister}
This section sets up our notation for Reidemeister torsion and recalls its relation to orders of modules.
We refer to~\cite{Tura} for a more lengthy discussion of the topic.

\medbreak

We refer to \cite[Chapter 1]{Tura} for the definition of the \textit{torsion}~$\tau(C) \in \F^\times$ of a finite acyclic chain complex~$C$ of finite dimensional based vector spaces over a field~$\F$. 
%In this case,~$\tau$ is well-defined in~$\F^\times$.
If~$C$ is not acyclic, then we set~$\tau(C)=0$. Hence, the torsion is an element in~$\F$. 
% Next, we recall two properties of the torsion.

% \begin{proposition}[{\cite[Theorem 1.5]{Tura}}]\label{multiplication law of torsions}
%     Let~$0\map C' \map C\map C''\map 0$ be a short exact sequence of finite acyclic chain complexes of finite dimensional based vector spaces. Then, \[\tau(C)=\pm\tau(C')\tau(C'').\]
% \end{proposition}

Given a Noetherian UFD~$R$ and an~$R$-module~$M$, we write~$\Delta(M)\in R/R^{\times}$ for the order of~$M$; we refer the reader to~\cite[Chapter 7]{Kawa} for the definition of~$\Delta(M).$
The next proposition recalls the relation between Reidemeister torsion and orders.

\begin{proposition}[{\cite[Theorem 4.7]{Tura}}]\label{torsion-order}
    Let~$R$ be a Noetherian UFD and let
    $$C_*=(C_m\dd{\map}C_0)$$
    be a based free chain complex of finite rank over~$R$ such that~$\rk(H_i(C_*))=0$ for all~$i\geq 0$. Then~$\widetilde{C}_*:=\widetilde{R}\otimes_R C_*$ is acyclic and satisfies the following formula:
    \[
    \displaystyle
    \tau(\widetilde{C}_*)=\displaystyle\prod_{i=0}^{m} \Delta(H_i (C_*))^{(-1)^{i+1}}.
    \]
\end{proposition}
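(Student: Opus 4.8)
The plan is to obtain the acyclicity claim directly from the earlier material and then to prove the torsion identity by localising $R$ at each of its height-one primes, thereby reducing it to the essentially classical computation over a discrete valuation ring. For acyclicity: since $R$ is a domain, an $R$-module has vanishing $R$-rank precisely when it is $R$-torsion, so the hypothesis $\rk_R(H_i(C_*))=0$ for all $i$ says exactly that each $H_i(C_*)$ is $R$-torsion, and Proposition~\ref{tensor acyclicity-torsion} then gives that $\widetilde C_*=\widetilde R\otimes_R C_*$ is acyclic; in particular $\tau(\widetilde C_*)$ is a genuine element of $\widetilde R^\times$, not $0$.

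For the torsion formula I would first note that both sides naturally lie in $\widetilde R^\times/R^\times$, since $\Delta(-)$ is only defined up to $R^\times$, and that, $R$ being a UFD, the family of normalised $\mathfrak p$-adic valuations $v_\mathfrak p\colon\widetilde R^\times\to\Z$ over the height-one primes $\mathfrak p\subset R$ induces an isomorphism $\widetilde R^\times/R^\times\xrightarrow{\sim}\bigoplus_{\mathfrak p}\Z$. Hence it suffices to check, one prime at a time, that $v_\mathfrak p(\tau(\widetilde C_*))=\sum_{i=0}^m(-1)^{i+1}v_\mathfrak p(\Delta(H_i(C_*)))$. Fixing $\mathfrak p$ and writing $\mathcal O:=R_\mathfrak p$, a DVR with the same fraction field $\widetilde R$ and valuation $v:=v_\mathfrak p$, I would base-change $C_*$ along $R\to\mathcal O$: the torsion is unchanged, since neither the fraction field nor the chosen bases change, while exactness of localisation gives $H_i(\mathcal O\otimes_R C_*)=H_i(C_*)_{\mathfrak p}$, and the standard localisation behaviour of the order over a Noetherian UFD (see \cite{Kawa}) identifies $v_\mathfrak p(\Delta(H_i(C_*)))$ with $\operatorname{length}_{\mathcal O}H_i(C_*)_{\mathfrak p}$. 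This reduces everything to proving, for a based free finite-rank chain complex $D_*$ over a DVR $\mathcal O$ that becomes acyclic over its fraction field, that $v(\tau(\widetilde D_*))=\sum_{i=0}^m(-1)^{i+1}\operatorname{length}_{\mathcal O}H_i(D_*)$.

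To prove this last identity I would use that over the PID $\mathcal O$ the cycle and boundary submodules $Z_i:=\ker d_i$ and $B_i:=\operatorname{im} d_{i+1}$ of $D_i$ are finitely generated free, with $B_i\subseteq Z_i$ and $Z_i/B_i=H_i(D_*)$; choose a free complement $\Gamma_i$ with $D_i=Z_i\oplus\Gamma_i$, so that $d_i$ restricts to an isomorphism $\Gamma_i\xrightarrow{\sim}B_{i-1}$, and fix $\mathcal O$-bases $\gamma_i$ of the $\Gamma_i$. Acyclicity over $\widetilde R$ forces $\widetilde B_i=\widetilde Z_i$, hence $d_{i+1}(\gamma_{i+1})$ together with $\gamma_i$ is an $\widetilde R$-basis of $\widetilde D_i$, and feeding the choice $b_i:=\gamma_i$ into the standard (Milnor--Turaev) formula for the torsion of a based acyclic complex \cite[Chapter 1]{Tura} writes $\tau(\widetilde D_*)$ as the alternating product $\prod_{i}[\,d_{i+1}(\gamma_{i+1}),\gamma_i\,/\,c_i\,]^{(-1)^{i+1}}$ of change-of-basis determinants against the given bases $c_i$ of $D_i$. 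The degree-$i$ factor is the determinant of the inclusion of the full-rank free submodule $B_i\oplus\Gamma_i$ into $D_i=Z_i\oplus\Gamma_i$, so by the elementary divisor theorem over $\mathcal O$ its valuation equals $\operatorname{length}_{\mathcal O}\big(D_i/(B_i\oplus\Gamma_i)\big)=\operatorname{length}_{\mathcal O}(Z_i/B_i)=\operatorname{length}_{\mathcal O}H_i(D_*)$; taking $v$ of the product reassembles the signs and finishes the argument.

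The main obstacle --- and the reason for passing to $R_\mathfrak p$ rather than arguing directly over $R$ --- is that over a general UFD the submodules $Z_i,B_i\subseteq C_i$ need not be free, nor even projective, so the complex cannot be decomposed into elementary two-term pieces over $R$ itself; base change to the DVR $R_\mathfrak p$ is exactly what repairs this. The remaining delicate point is the bookkeeping of the sign and degree conventions in the definition of $\tau$, so that the exponents emerge as $(-1)^{i+1}$ rather than $(-1)^{i}$; I would pin this down by first treating the two-term complex $0\to D_1\hookrightarrow D_0\to 0$, where $\tau(\widetilde D_*)$ is the inverse of $\det(d_1)$ while $\Delta(H_0(D_*))\doteq\det(d_1)$, and matching it against the Milnor--Turaev formula.
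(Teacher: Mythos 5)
Your proposal is correct: the acyclicity claim follows from Proposition~\ref{tensor acyclicity-torsion} as you say, and the reduction to height-one localizations of the Noetherian UFD, the identification of $v_{\mathfrak{p}}(\Delta(H_i(C_*)))$ with the length of $H_i(C_*)_{\mathfrak{p}}$ over the DVR $R_{\mathfrak{p}}$, and the Smith-normal-form computation of the change-of-basis determinants (with your two-term sanity check fixing the exponent $(-1)^{i+1}$) assemble into a complete proof of the formula up to units of $R$. Note that the paper itself does not prove this proposition but imports it from \cite[Theorem 4.7]{Tura}, and your argument is essentially the standard one underlying that reference, so there is no divergence to reconcile.
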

Note that the equality in the above equation holds up to multiplication by units of~$R$.

\subsection{Twisted Alexander polynomials}
\label{sub:twistedAlexanderpolynomials}
This short section recalls the definition of twisted Alexander polynomials.
Helpful references on the topic include~\cite{FriedlVidussi} and~\cite{KirkLivingston}.
\medbreak

Let~$R$ be a Noetherian UFD. Let~$N$ be a 3-manifold with empty or toroidal boundary, let~$\psi \colon \pi_1(N)\map H$ be a non-trivial homomorphism to a free abelian group~$H$ and let~$\rho \colon \pi_1(N)\map \GL(n,R)$ be a representation. 
The tensor representation~$\rho\otimes\psi \colon \pi_1(N)\map \GL(n,R[H])$ is defined by~$g\mapsto \rho(g)\cdot\psi(g)$.
\begin{definition}
For~$i \geq 0$, the \emph{$i^{th}$ twisted Alexander polynomial}~$\Delta^{\rho\otimes\psi}_{N,i}$ of~$(N,\rho\otimes\psi)$ is 
%defined to be 
$$\Delta(H_i(N;R[H]^n,\rho\otimes\psi))\in R[H]/R[H]^\times$$
%for each~$i\geq 0$. 
\end{definition}

We now specialize to the case where~$N=X_L$ is the exterior of a~$\mu$-component link~$L \subset S^3$ (this is indeed a 3-manifold with toroidal boundary)
%Let~$L\subset S^3$ be a link with~$\mu$ components. Set~$X_L=S^3\setminus \nu L$, then~$X_L$ is a 3-manifold with toroidal boundary.
and~$\psi=\gamma \colon \pi_1(X_L)\twoheadrightarrow H_1(X_L)\cong\Z^\mu$ is the Hurewicz epimorphism. 
%Then,~$\gamma$ is non-trivial. Let~$R$ be a Noetherian UFD.
\begin{definition}
    The \emph{$i^{th}$ twisted Alexander polynomial}~$\Delta^\rho_{L,i}$ of a link~$L$ associated with the representation~$\rho \colon\pi_1(X_L)\map \GL(n,R)$ is defined as~$\Delta^{\rho\otimes\gamma}_{X_L,i}$ for each~$i\geq 0$. 
\end{definition}

\subsection{Twisted Reidemeister torsion}
\label{sub:TwistedReidemeister}

We recall the definition of the twisted Reidemeister torsion of a CW pair, referring to~\cite{FriedlVidussi} for further details.
In this section, we work with cellular chain complexes.
\medbreak

Let~$(X,Y)$ be a finite CW pair and let~$\varphi \colon \pi_1(X)\map \GL(n,S)$ be a representation where~$S$ is an integral domain. 
Then,~$S^n$ is a~$(S,\Z[\pi_1(X)])$-bimodule via the induced ring homomorphism~$\varphi:\Z[\pi_1(X)]\map M_n(S)$. The twisted chain complex~$S^n\otimes_{\Z[\pi_1(X)]} C_*(\widetilde{X},\widetilde{Y})$ consists of finitely generated free~$S$-modules.

Bases for these modules are obtained by lifting the cell structure of~$X$ to the universal cover, orienting and ordering the outcome, and combining the resulting basis of~$C_*(\widetilde{X},\widetilde{Y})$ with the canonical basis of~$S^n$. 

%oriented ordered cells~$\{\widetilde{e}^k_i\}$ of~$\widetilde{X}$ and the standard basis of~$S^n$. 
% By applying~$\widetilde{S}\otimes -$, we get a finite chain complex of finite dimensional based vector spaces.

% We have the canonical bases for the chains from the oriented ordered cells~$\{\widetilde{e}^k_i\}$ of~$\widetilde{X}$ and the standard basis of~$S^n$. 
% By applying~$\widetilde{S}\otimes -$, we get a finite chain complex of finite dimensional based vector spaces.

\begin{definition}
    The \emph{twisted Reidemeister torsion}~$\tau((X,Y),\varphi)\in \widetilde{S}$ of a CW pair~$(X,Y)$ associated with a representation~$\varphi:\pi_1(X)\map \GL(n,S)$ is defined as the torsion of the based chain complex~$\widetilde{S}^n\otimes C_*(\widetilde{X},\widetilde{Y})$.
\end{definition}

The next proposition resolves the ambiguity in the choice of the underlying CW-structure of the 3-manifold
%with empty or toroidal boundary 
and of the lifts of the cells.
%can be resolved via the following proposition.

\begin{proposition}[\cite{FriedlVidussi},\cite{Chap}] \label{indeterminacy of torsion}
    The twisted Reidemeister torsion~$\tau(N,\varphi)$ of a 3-manifold~$N$ with empty or toroidal boundary is well-defined up to multiplication by an element in \[
    \{\pm \det(\varphi(g))\,|\,g\in \pi_1(N)\}.
    \]
    In other words, up to this indeterminacy,~$\tau(N,\varphi)$ is independent of the choice of underlying CW-structure and the choice of the lifts of the cells.
\end{proposition}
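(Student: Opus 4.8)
The plan is to isolate the two independent sources of ambiguity in the definition of $\tau(N,\varphi)$ and to treat each separately: first, for a \emph{fixed} CW structure on $N$, the choices of ordering, orientation and lift of the cells used to build the based complex $\widetilde{S}^n\otimes_{\Z[\pi_1(N)]}C_*(\widetilde N)$; and second, the choice of the CW structure itself. The first is controlled by the change-of-basis behaviour of torsion, and the second by the topological invariance of Whitehead torsion.

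For the first part I would fix a CW structure on $N$ and recall the elementary fact (see \cite[Chapter 1]{Tura}) that if $C$ is a finite acyclic chain complex of finite-dimensional vector spaces over a field, equipped with two families of bases $\{c_i\}$ and $\{c_i'\}$, then
\[
\tau(C,\{c_i'\})=\tau(C,\{c_i\})\cdot\prod_i [c_i'/c_i]^{(-1)^{i}},
\]
where $[c_i'/c_i]$ denotes the determinant of the transition matrix from $c_i$ to $c_i'$; thus changing bases only rescales the torsion by a product of such determinants. Reordering the cells of $N$ permutes the chosen basis of each $C_i(\widetilde N)$, hence of $\widetilde S^n\otimes_{\Z[\pi_1(N)]}C_i(\widetilde N)$, contributing a sign; reversing the orientation of a cell contributes a sign; and replacing the lift $\widetilde e$ of an $i$-cell $e$ by $g\cdot\widetilde e$ for some $g\in\pi_1(N)$ replaces the block $\{v_j\otimes\widetilde e\}_{j=1}^n$ of the basis (with $v_1,\dots,v_n$ the standard basis of $S^n$) by $\{v_j\otimes g\widetilde e\}_j=\{(v_j\cdot g)\otimes\widetilde e\}_j$, which is obtained from the old block by the matrix $\varphi(g)^{\pm1}$; so the torsion gets rescaled by $\det(\varphi(g))^{\pm1}$. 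Since $\det(\varphi(g))\in S^\times$ for every $g$, since $\det(\varphi(g))^{-1}=\det(\varphi(g^{-1}))$, and since $-1=-\det(\varphi(1))$, the set $\{\pm\det(\varphi(g))\mid g\in\pi_1(N)\}$ is a subgroup of $\widetilde S^\times$ closed under all of these operations; hence $\tau(N,\varphi)$ computed from a fixed CW structure is well-defined up to this set.

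For the second part I would invoke that Reidemeister torsion is a simple homotopy invariant: if $f\colon X\to Y$ is a simple homotopy equivalence of finite CW complexes and $\varphi\colon\pi_1(Y)\to\GL(n,S)$ is a representation, then $\tau(Y,\varphi)$ and $\tau(X,\varphi\circ f_*)$ coincide up to the indeterminacy above, since the torsion is multiplicative and the Whitehead torsion of $f$ vanishes (see \cite[Chapter 2]{Tura}). It then remains to observe that any two finite CW structures on $N$ — that is, any two finite CW complexes homeomorphic to $N$ — are related by a simple homotopy equivalence. This is precisely Chapman's theorem on the topological invariance of Whitehead torsion \cite{Chap}: a homeomorphism between finite CW complexes is a simple homotopy equivalence. (In the present low-dimensional setting one can instead appeal to Moise's theorem, which provides a triangulation of $N$ and a common subdivision of any two triangulations, together with the classical fact that a subdivision of a simplicial complex is an expansion, hence a simple homotopy equivalence.) Combining the two parts, and noting that a change of basepoint only changes $\varphi$ by an inner automorphism of $\pi_1(N)$ and so does not affect $\det\circ\,\varphi$, we conclude that $\tau(N,\varphi)$ is well-defined up to multiplication by an element of $\{\pm\det(\varphi(g))\mid g\in\pi_1(N)\}$.

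I expect the second part to be the only genuine difficulty: the first is routine determinant bookkeeping, whereas independence of the CW structure rests on the substantial input of topological invariance of Whitehead torsion (or, in dimension three, on the Hauptvermutung). Finally, I would remark that nothing in the argument uses that $N$ is a $3$-manifold with toroidal boundary — the statement holds for any finite CW complex — the hypothesis being retained only for consistency with the rest of the paper.
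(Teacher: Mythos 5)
Your argument is correct: the paper does not prove this proposition but simply cites Friedl--Vidussi and Chapman, and your two-step reduction (change-of-basis bookkeeping giving factors $\pm\det(\varphi(g))$ for a fixed CW structure, then independence of the CW structure via topological invariance of Whitehead torsion, or Moise/subdivision in dimension three) is exactly the standard argument underlying those citations. Your closing observation that the statement needs nothing beyond $N$ being a finite CW complex, the $3$-manifold hypothesis being contextual, is also accurate.
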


In the sequel, given~$w\in \widetilde{S}$, we write
\[\tau(N,\varphi) \doteq_\tau w\]
if there exists a representative of~$\tau (N,\varphi)$ which equals~$w$. 

We recall the notation from Section~\ref{sub:twistedAlexanderpolynomials}.
Given a Noetherian UFD~$R$, the polynomial ring~$R[H]$ is also Noetherian UFD whose units are of the form~$r\cdot h$ where~$r\in R^\times$ and~$h\in H$. 
In the case where the representation~$\varphi$ is the tensor representation~$\rho\otimes\psi \colon \pi_1(N)\map \GL(n,R[H])$, by specializing Proposition~\ref{indeterminacy of torsion}, the indeterminacy of~$\tau(N,\rho\otimes\psi)$ is up to multiplication by an element in \[
\{\pm \det(\varphi(g))h\,|\,g\in \pi_1(N),h\in H\}.
\] From this point onwards, we regard the twisted
Reidemeister torsion of~$(N,\rho\otimes\psi)$ to be the value in~$\widetilde{R[H]}$ up to this indeterminacy. 

\begin{remark}
The twisted Alexander polynomial is closely related to the twisted Reidemeister torsion. For example, in the case that~$\rk(\text{Im}(\psi))\geq 2$, \cite[Proposition 3.2]{FriedlVidussi} shows that\[
\tau(N,\rho\otimes\psi) \doteq_\Delta\Delta^{\rho\otimes\psi}_{N,1},\]
where~$\doteq_\Delta$ indicates equality up to multiplication by units of~$R[H]$.
For this reason, the twisted Reidemeister torsion is occasionally referred to as the twisted Alexander polynomial; see e.g.~\cite{Wada}.
\end{remark}

Next, we consider the Reidemeister torsion of the exterior~$\xl$ of a link~$L \subset S^3$.
Recall that this is a 3-manifold with toroidal boundary and that~$\gamma \colon \pi_1(X_L)\twoheadrightarrow H_1(X_L)$ denotes the Hurewicz epimorphism.

\begin{definition}\label{definition of the torsion of link}
    The \emph{twisted Reidemeister torsion}~$\tau^\rho_L(t_1\dd{,}t_\mu)$ of a link~$L$ associated with a representation~$\rho \colon\pi_1(X_L)\map \GL(n,R)$ is defined as~$\tau(X_L,\rho\otimes\gamma)$.
\end{definition}

Since~$H_1(X_L)\cong \Z^\mu=\langle t_1\dd{,}t_\mu\rangle$ where~$t_i$ corresponds to the~$i^{th}$ meridian of~$L$,~$\tau(X_L,\rho\otimes\gamma)$ can be represented by a rational function of~$\mu$ variables~$t_1\dd{,}t_\mu$ over~$R$. Therefore, our notation for the twisted Reidemeister torsion is justified.

\section{Proof of the main theorem}
\label{sec:Proof}
\subsection{Preliminary lemmas}
\label{sub:PreliminaryLemmas}

This short section records two lemmas that will be needed in the proof of Theorem~\ref{thm:MainIntro}.
\medbreak

Let~$N$ be a 3-manifold with empty or toroidal boundary, let~$\psi:\pi_1(N)\map H$ be a non-trivial homomorphism to a free abelian group~$H$ and let~$\rho \colon\pi_1(N)\map \GL(n,R)$ be a representation. 
Furthermore let~$\phi \colon H\map J$ be a homomorphism to a free abelian group such that~$\phi\circ\psi$ is non-trivial.
We denote the induced ring homomorphism~$R[H]\map R[J]$ by~$\phi$ as well. 
Set 
\[S:=\{f\in R[H]\mid \phi(f)\neq 0\}=R[H]\setminus \ker\phi.\] 
Note that~$\phi$ induces a homomorphism~$S^{-1}R[H]\map \widetilde{R[J]}$ which we also denote by~$\phi$.
\begin{lemma}[\cite{FriedlVidussi} Proposition 3]\label{evalutation lemma}
     We have~$\tau(N,\rho\otimes\psi)\in S^{-1}R[H]$, and \[    \tau(N,\rho\otimes(\phi\circ\psi))\doteq_\tau\phi(\tau(N,\rho\otimes\psi)).
    \]
\end{lemma}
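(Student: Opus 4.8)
The plan is to work entirely at the level of the based twisted chain complex of a fixed finite CW structure on $N$, exploiting the fact that the representation $\rho \otimes (\phi \circ \psi)$ is literally the composition of $\rho \otimes \psi$ with the ring homomorphism $\phi \colon R[H] \to R[J]$, extended to $n \times n$ matrices. First I would choose a finite CW structure on $N$ and lifts of the cells to the universal cover, so that $C_*(\widetilde N)$ becomes a based free $\Z[\pi_1(N)]$-complex; tensoring gives based free complexes $C_* := (R[H])^n \otimes_{\Z[\pi_1(N)]} C_*(\widetilde N)$ and $C_*^J := (R[J])^n \otimes_{\Z[\pi_1(N)]} C_*(\widetilde N)$, and by construction the boundary matrices of $C_*^J$ are obtained from those of $C_*$ by applying $\phi$ entrywise. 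The point is that $S^{-1}R[H]$ is precisely the localization in which every element of $R[H]$ not killed by $\phi$ becomes invertible, and $\phi$ factors as $R[H] \to S^{-1}R[H] \to \widetilde{R[J]}$.

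The first substantive step is to show $\tau(N,\rho\otimes\psi) \in S^{-1}R[H]$, i.e.\ that the torsion, a priori an element of $\widetilde{R[H]}$, actually lies in the smaller localization. Since $\phi \circ \psi$ is non-trivial, $\rho \otimes (\phi\circ\psi)$ sends some loop to a non-central element, so by the standard acyclicity argument (Lemma~\ref{chi-twisted} together with Proposition~\ref{tensor acyclicity-torsion}, applied over $R[J]$ whose fraction field receives $\phi$) the complex $\widetilde{R[J]}\otimes C_*^J$ is acyclic; in particular the relevant minors of the boundary maps of $C_*^J$ are nonzero. Because those minors are the images under $\phi$ of the corresponding minors of $C_*$, the latter minors lie outside $\ker\phi$, hence are units in $S^{-1}R[H]$. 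Writing $\tau(C_*)$ via the usual alternating product/ratio of such minors (Turaev's formula for the torsion of a based acyclic complex after choosing a compatible collection of subbases) then exhibits a representative of $\tau(N,\rho\otimes\psi)$ lying in $S^{-1}R[H]$.

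The second step is the compatibility of torsion with the ring map $\phi \colon S^{-1}R[H] \to \widetilde{R[J]}$. Here I would invoke the general functoriality of Reidemeister torsion under a ring homomorphism: if $C_*$ is a based free complex over a ring $A$, $f \colon A \to B$ a homomorphism to an integral domain, and $B \otimes_A C_*$ is acyclic, then choosing any collection of subbases that makes $C_*$ "$\tau$-computable" over the fraction field of $A$ also works after applying $f$, and $\tau(B\otimes_A C_*) = f(\tau(C_*))$ up to the usual sign/unit ambiguity. Applying this with $A = S^{-1}R[H]$, $B = R[J]$ (or its fraction field), $f = \phi$, and $C_* = S^{-1}R[H] \otimes C_*(\widetilde N)$ gives $\tau(N,\rho\otimes(\phi\circ\psi)) \doteq_\tau \phi(\tau(N,\rho\otimes\psi))$, since the based complex computing the left-hand torsion is exactly $B \otimes_A C_*$.

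The main obstacle I anticipate is bookkeeping the indeterminacies rather than any deep point: one must check that the subbases chosen to compute $\tau(C_*)$ over $\widetilde{S^{-1}R[H]}$ remain valid after applying $\phi$ (i.e.\ the same minors stay nonzero, which is exactly step one), and that the $\{\pm\det(\varphi(g))h\}$-type ambiguities on the two sides are matched up correctly under $\phi$ — this is why the statement is phrased with $\doteq_\tau$ and why the hypothesis that $\phi\circ\psi$ be non-trivial is essential (it is what guarantees acyclicity on the $R[J]$ side, without which $\phi(\tau(N,\rho\otimes\psi))$ need not even equal $\tau$ of an acyclic complex). Everything else is a direct consequence of the definition of torsion as an alternating product of determinants of submatrices, which commutes with the entrywise application of a ring homomorphism.
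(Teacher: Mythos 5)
First, a point of comparison: the paper does not actually prove Lemma~\ref{evalutation lemma} — it is quoted from the literature (Friedl--Vidussi) — so your proposal can only be measured against the standard argument. The second half of your plan (once both twisted complexes are known to be acyclic, compute the torsion as an alternating product of minors via a system of subbases, and use that determinants commute with the entrywise ring homomorphism $\phi$) is indeed the standard mechanism and is fine in outline, except that you have the direction of the subbase argument backwards: a system of subbases whose minors are nonzero over $\widetilde{R[H]}$ need \emph{not} remain valid after applying $\phi$, since those minors may lie in $\ker\phi$; the usable implication is the converse, namely that a system valid over $\widetilde{R[J]}$ pulls back to one valid over $\widetilde{R[H]}$ (because $\phi(\det)\neq 0$ forces $\det\neq 0$), and this is also what yields the membership $\tau(N,\rho\otimes\psi)\in S^{-1}R[H]$.

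The genuine gap is in your first step. You claim that non-triviality of $\phi\circ\psi$ implies, "by the standard acyclicity argument" via Lemma~\ref{chi-twisted} and Proposition~\ref{tensor acyclicity-torsion}, that $\widetilde{R[J]}\otimes C_*^J$ is acyclic. This is false: Lemma~\ref{chi-twisted} only gives that the alternating sum of the ranks of the $H_i$ vanishes, and non-triviality of the coefficient homomorphism controls $H_0$ and $H_3$ but not $H_1$ or $H_2$. Twisted torsions of link exteriors do vanish — the paper's proof of Theorem~\ref{thm:MainIntro} devotes Case~1 and Subcase~2.1 precisely to non-acyclic situations in which $\gamma'\circ\iota_*$ is surjective. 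Since your proof of both assertions of the lemma routes through this acyclicity, it collapses exactly where the main theorem needs the lemma most, e.g.\ when $\det(T\rho'(K_\mu)-I_n)=0$ and the lemma is used to conclude $\tau_L^\rho(t_1,\ldots,t_{\mu-1},1)=0$. In particular you never treat the delicate configuration where $\widetilde{R[H]}\otimes C_*$ is acyclic but $\widetilde{R[J]}\otimes C_*^J$ is not (this occurs for the specialization $t_\mu\mapsto 1$ when, say, all $\ell k(K_i,K_\mu)=0$ and $1$ is an eigenvalue of $\rho'(K_\mu)$): there one must still exhibit a representative of $\tau(N,\rho\otimes\psi)$ in $S^{-1}R[H]$ and show that $\phi$ annihilates it, yet no system of subbases can have all minors outside $\ker\phi$ (otherwise the target complex would be acyclic), so the "apply $\phi$ to the minors" strategy cannot close this case and a different mechanism — typically the order-theoretic description of torsion as in Proposition~\ref{torsion-order} combined with a base-change argument — is required.
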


We fix some notation needed to state our second lemma. 
Let~$L=K_1\dd{\cup}K_{\mu}\subset S^3$ be a link with~$\mu\geq 2$ components.
Set~$L':=L\setminus K_{\mu}\subset S^3.$ 
Write the respective exteriors of~$L$ and~$L'$ as~$X_L:=S^3\setminus \nu L$ and~$X_{L'}:=S^3\setminus \nu L'$. Let~$\rho' \colon\pi_1(\xlpr)\map \GL(n,R)$ be a representation where~$R$ is a Noetherian UFD. Denote~$\Gamma:= R[H_1(\xlpr)]$. Consider the Hurewicz epimorphism~$\gamma':\pi_1(\xlpr)\twoheadrightarrow H_1(\xlpr)$ and the tensor representation~$\rho'\otimes\gamma':\pi_1(\xlpr)\map \GL(n,\Gamma)$. We define
\[
T:=\gamma'(K_\mu)=\prod_{i=1}^{\mu-1}t_i^{\ell k(K_i,K_\mu)}.
\]
The second technical lemma of this section now reads as follows. 
\begin{lemma}\label{homology of pair}
    The~$\Gamma^n$-homology of~$\xlpair$ is 
\[ H_i:=H_i(X_{L'},X_L;\Gamma^n,\rho'\otimes\gamma')\cong \begin{cases}
        (\Gamma^n)_{\pi_1(K_\mu)}=\Gamma^n/(T\rho'(K_{\mu})-I_n)\Gamma^n \quad &\text{for i=2,} \\
        (\Gamma^n)^{\pi_1(K_\mu)} \quad &\text{for i=3,} \\
        0 \quad &\text{otherwise.}
    \end{cases}
    \]
    % Here,~$\Gamma^n$ is a right~$\Z[\pi_1(K_\mu)]$-module along the ring homomorphism induced by the inclusion~$K_\mu\hookrightarrow\xlpr$.
        Here, the right~$\Z[\pi_1(X_{L'})]$-module $\Gamma^n$ is considered as a right~$\Z[\pi_1(K_\mu)]$-module using the inclusion induced ring homomorphism $\Z[\pi_1(K_\mu)] \to\Z[\pi_1(X_{L'})]$.

    Additionally, $\rk \ H_2=\rk \  H_3$ and
    \begin{itemize}
\item if~$\det(T\rho'(K_{\mu})-I_n)= 0$, then $\rk \ H_2=\rk \ H_3>0$,
\item if~$\det(T\rho'(K_{\mu})-I_n)\neq 0$, then all the~$H_i$ are~$\Gamma$-torsion and $H_3=0.$
    \end{itemize}
    % \begin{itemize}
    %     \item If~$\det(T\rho'(K_{\mu})-I_n)\neq 0$, then all the homology modules are~$\Gamma$-torsion and the third homology module is 0.
    %     \item If~$\det(T\rho'(K_{\mu})-I_n)= 0$, then the second and the third homology modules have the same positive rank.
    % \end{itemize}
\end{lemma}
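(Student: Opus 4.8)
The plan is to compute the twisted homology $H_i(X_{L'}, X_L; \Gamma^n, \rho'\otimes\gamma')$ via the long exact sequence of the pair together with excision, and then analyze ranks using Lemma~\ref{chi-twisted}. The key geometric observation is that $X_{L'}$ is obtained from $X_L$ by filling in a solid-torus neighborhood $\nu K_\mu$ of the last component; more precisely, $X_{L'} = X_L \cup_{T_\mu} (\nu K_\mu)$ where $T_\mu$ is the boundary torus of $X_L$ lying over $K_\mu$ and $\nu K_\mu \cong S^1 \times D^2$. By excision, $H_i(X_{L'}, X_L; \Gamma^n, \rho'\otimes\gamma') \cong H_i(\nu K_\mu, T_\mu; \Gamma^n, \rho'\otimes\gamma')$, where the coefficient system on $\nu K_\mu$ and $T_\mu$ is restricted along the inclusion-induced maps $\pi_1(T_\mu) \to \pi_1(\nu K_\mu) = \pi_1(K_\mu) \to \pi_1(X_{L'})$. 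This is where the appearance of $\pi_1(K_\mu) \cong \Z$ and the module structure over $\Z[\pi_1(K_\mu)]$ enters, justifying the last sentence of the statement.

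First I would set up the long exact sequence of the pair $(\nu K_\mu, T_\mu)$ with $\Gamma^n$-coefficients. Since $\nu K_\mu \simeq S^1 = K_\mu$ and $T_\mu \cong S^1 \times S^1$, with the inclusion $T_\mu \hookrightarrow \nu K_\mu$ inducing on $\pi_1$ the projection $\Z^2 \to \Z$ onto the longitude factor (the meridian of $K_\mu$ bounds in $\nu K_\mu$), I would compute both $H_*(K_\mu; \Gamma^n, \rho'\otimes\gamma')$ and $H_*(T_\mu; \Gamma^n, \rho'\otimes\gamma')$ using Proposition~\ref{H0} for the $H_0$ terms and the standard chain complex of $S^1$ (resp.\ the torus) with a $\Z$-action (resp.\ $\Z^2$-action). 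For $S^1$ one gets the two-term complex $\Gamma^n \xrightarrow{\,T\rho'(K_\mu) - I_n\,} \Gamma^n$, whose homology is $(\Gamma^n)^{\pi_1(K_\mu)}$ in degree $1$ and $(\Gamma^n)_{\pi_1(K_\mu)} = \Gamma^n/(T\rho'(K_\mu)-I_n)\Gamma^n$ in degree $0$; note the action of the generator of $\pi_1(K_\mu)$ on $\Gamma^n$ is by $(T\rho'(K_\mu))^{-1}$, so invariants and coinvariants are governed by $T\rho'(K_\mu) - I_n$. For the torus, the meridian of $K_\mu$ acts on $\Gamma^n$ by the identity (it is nullhomotopic in $X_{L'}$), so $H_*(T_\mu; \Gamma^n)$ is that of $S^1$ tensored with $H_*(S^1)$; unwinding the long exact sequence and the excision shift produces exactly the claimed answer: $H_2 \cong (\Gamma^n)_{\pi_1(K_\mu)}$, $H_3 \cong (\Gamma^n)^{\pi_1(K_\mu)}$, and $H_i = 0$ otherwise.

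For the rank statements, I would argue as follows. The module $(\Gamma^n)^{\pi_1(K_\mu)} = \ker(T\rho'(K_\mu) - I_n)$ is a free $\Gamma$-submodule of $\Gamma^n$, hence torsion-free, so its rank equals $n - \rk\big((T\rho'(K_\mu) - I_n)\Gamma^n\big)$, while $(\Gamma^n)_{\pi_1(K_\mu)} = \Gamma^n/(T\rho'(K_\mu)-I_n)\Gamma^n$ has rank $n - \rk\big((T\rho'(K_\mu) - I_n)\Gamma^n\big)$ as well (the image has a well-defined rank over the domain $\Gamma$), giving $\rk H_2 = \rk H_3$ directly. Alternatively one can deduce $\rk H_2 = \rk H_3$ from Lemma~\ref{chi-twisted} applied to the pair — equivalently to $(\nu K_\mu, T_\mu)$, using $\chi(\nu K_\mu) = \chi(T_\mu) = 0$ — since $H_2$ and $H_3$ are the only nonzero homology modules. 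For the dichotomy: $\det(T\rho'(K_\mu) - I_n) \neq 0$ means the endomorphism $T\rho'(K_\mu) - I_n$ of $\widetilde{\Gamma}^n$ is injective, hence $(\Gamma^n)^{\pi_1(K_\mu)}$ has rank $0$, i.e.\ it vanishes (being torsion-free), so $H_3 = 0$ and, by $\rk H_2 = \rk H_3$, $H_2$ is torsion too; conversely $\det(T\rho'(K_\mu) - I_n) = 0$ forces a nonzero kernel over $\widetilde{\Gamma}$, so $\rk H_3 = \rk H_2 > 0$.

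The main obstacle I anticipate is bookkeeping the coefficient module structures correctly through excision: one must verify that the $\Z[\pi_1(X_{L'})]$-module $\Gamma^n$, when pulled back along $\pi_1(T_\mu) \to \pi_1(X_{L'})$ and $\pi_1(\nu K_\mu)\to \pi_1(X_{L'})$, is computed with the meridian of $K_\mu$ acting trivially and the longitude (i.e.\ $[K_\mu]$ itself) acting by $(\rho'\otimes\gamma')([K_\mu])^{-1} = (T\rho'(K_\mu))^{-1}$ — and in particular that $\gamma'$ sends $K_\mu$ to $T = \prod_{i=1}^{\mu-1} t_i^{\ell k(K_i, K_\mu)}$, which is precisely the homology class of a longitude of $K_\mu$ in $X_{L'}$. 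Everything else — the $S^1$ and torus computations, the long exact sequence, and the rank arithmetic over the domain $\Gamma$ — is routine once this identification is in place.
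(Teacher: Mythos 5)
Your proposal is correct, but it takes a genuinely different route for the main computation. Both you and the paper begin with excision, replacing $(X_{L'},X_L)$ by $(\bar{\nu}K_\mu,\partial\bar{\nu}K_\mu)$. From there the paper applies Poincar\'e--Lefschetz duality to the $3$-manifold $\bar{\nu}K_\mu$, obtaining $H_i(\bar{\nu}K_\mu,\partial\bar{\nu}K_\mu)\cong H^{3-i}(\bar{\nu}K_\mu)\cong H^{3-i}(K_\mu)$, so that everything reduces in one step to the (co)homology of a circle and the vanishing for $i\leq 1$ is immediate from $K_\mu$ being $1$-dimensional. You instead run the long exact sequence of the pair $(\nu K_\mu, T_\mu)$, which requires computing $H_*(T_\mu;\Gamma^n)$ via a K\"unneth-type splitting (valid because the meridian of $K_\mu$ bounds a disk in $\nu K_\mu\subset X_{L'}$ and hence acts trivially) and then identifying the map $H_1(T_\mu)\to H_1(\nu K_\mu)$ as the projection killing the meridional summand. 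This works and is more hands-on, but it is strictly more bookkeeping than the duality argument, which sidesteps the torus entirely; what your approach buys is that it avoids invoking twisted Poincar\'e duality. Your identification of invariants and coinvariants with $\ker$ and $\operatorname{coker}$ of $T\rho'(K_\mu)-I_n$ (via the right action by $(T\rho'(K_\mu))^{-1}$) and your rank/dichotomy arguments coincide with the paper's, which uses both the Euler characteristic of the pair and the four-term exact sequence $0\to\ker(A)\to\Gamma^n\to\Gamma^n\to H_2\to 0$. One small inaccuracy: $(\Gamma^n)^{\pi_1(K_\mu)}$ need not be a \emph{free} $\Gamma$-submodule when $\Gamma$ is merely a UFD; but only torsion-freeness and well-definedness of rank over the domain $\Gamma$ are needed, so the argument is unaffected.
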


\begin{proof}
Write~$H_i:=H_i(X_{L'},X_L;\Gamma^n,\rho'\otimes\gamma')$ for simplicity. 
Considering the cellular chain complex of the pair~$(X_{L'},X_L)$, we get~$H_i=0$ for~$i\geq 4$. 
Assume~$i\leq 3$ from this point onward.
Note that~$\Bar{\nu}K_\mu$ is a 3-manifold with boundary. We get the following~$\Gamma$-isomorphisms:
    \begin{align*}
        H_i(X_{L'},X_L;\Gamma^n,\rho'\otimes\gamma') &\cong H_i(\Bar{\nu}K_\mu,\partial\Bar{\nu}K_\mu;\Gamma^n,(\rho'\otimes\gamma')\circ \eta_*) &\text{by excision} \\
        &\cong H^{3-i}(\Bar{\nu}K_\mu;\Gamma^n,(\rho'\otimes\gamma')\circ \eta_*) &\text{by Poincaré duality}\\
        &\cong H^{3-i}(K_\mu;\Gamma^n,(\rho'\otimes\gamma')\circ \eta_*\circ \iota_*) &\text{by homotopy equivalence}\\
        &\cong H^{3-i}(K_\mu;\Gamma^n,(\rho'\otimes\gamma')\circ (\eta\circ\iota)_*) &\text{by functoriality}
    \end{align*}
where~$\iota \colon K_\mu \hookrightarrow \Bar{\nu}K_\mu$ and~$\eta \colon \Bar{\nu}K_\mu\hookrightarrow X_{L'}$ are the inclusions; these induce ring homomorphisms between the group rings 
%of~$\Z$ 
over the corresponding fundamental groups.
    
As~$K_\mu$ is a 1-manifold,~$H_i=0$ for~$i\leq 1$.
By Poincaré duality applied to~$K_\mu$ and Proposition~\ref{H0}, we have:
    \begin{align*}
        H_2 &\cong H_0(K_\mu;\Gamma^n,(\rho'\otimes\gamma')\circ (\eta\circ \iota)_*)\cong(\Gamma^n)_{\pi_1(K_\mu)} \\
        H_3 &\cong H^0(K_\mu;\Gamma^n,(\rho'\otimes\gamma')\circ (\eta\circ \iota)_*)\cong(\Gamma^n)^{\pi_1(K_\mu)}.
    \end{align*}
As~$\pi_1(K_\mu)$ is infinite cyclic, Definition \ref{pi invariant} implies that 
\[H_2 \cong (\Gamma^n)_{\pi_1(K_\mu)}\cong\Gamma^n/\langle(v\cdot\rho'\otimes\gamma'(K_\mu)^{-1}-v)\mid v\in \Gamma^n\rangle \cong \Gamma^n/(T\rho'(K_\mu)-I_n)\Gamma^n.\]
It remains to prove the last two assertions.
    
Since~$\xl$ and~$\xlpr$ are both~$3$-manifolds with toroidal boundary,~$\chi^\Gamma(X_L)=\chi(X_L)=0$ and~$\chi^\Gamma(\xlpr)=\chi(\xlpr)=0$ by Lemma \ref{chi-twisted}. The long exact sequence of the pair~$\xlpair$ now implies that~$\chi^\Gamma(X_{L'},X_L)=0$. Therefore, we have~$\rk  H_2=\rk \ H_3$.

    Denote~$A:=T\rho'(K_\mu)-I_n$
and observe that the exact sequence
$$ 0 \to \ker(A) \to R^n \xrightarrow{A} R^n \to H_2 \to 0$$
ensures that $\rk \ H_2=\rk \ker(A).$
 If~$\det A= 0$, then $\rk \ H_3=\rk \ H_2=\rk \ker(A)>0$.
 If~$\det A\neq 0$, then~$\ker(A)$=0 and therefore~$\rk \ H_3=\rk \ H_2=\rk \ker(A)=0$. 
 But since~$\Gamma^n$ is torsion-free, so is~$H_3\cong (\Gamma^n)^{\pi_1(K_\mu)}\subset \Gamma^n$.
 This implies~$H_3$ is both torsion and torsion-free, so~$H_3=0$.
    % If~$\det A\neq 0$, then~$A$ is injective, hence \(\Gamma^n\cong A \Gamma^n.\) Therefore,~$\rk \ H_2=\rk \ H_3=0$. On the other hand~$H_3\cong (\Gamma^n)^{\pi_1(K_\mu)}\subset \Gamma^n$ is torsion-free because~$\Gamma^n$ is torsion-free. This implies~$H_3$ is both torsion and torsion-free, so~$H_3=0$.
    
    % If~$\det A= 0$, then the nullspace has positive rank (i.e. the image has the rank smaller than~$n$). Hence,~$\rk \ H_2=\rk \ H_3>0.$
    \end{proof}

\subsection{Conclusion of the theorem}
\label{sub:ProofConclusion}

This section concludes the proof of our main result.
As in the previous sections, we set \[
    T:=\prod_{i=1}^{\mu-1}t_i^{\ell k(K_i,K_\mu)}.
    \]
and it is understood that all links are in~$S^3.$
We also recall that $\doteq_\Delta$ denotes equality up to multiplication by units of $R[H]=R[t_1^{\pm 1},\ldots,t_{\mu-1}^{\pm 1}].$

\begin{customthm}
{\ref{thm:MainIntro}}
Let~$L=K_1\dd{\cup}K_{\mu}$ be a link with~$\mu\geq 2$ components and let~$L':=L\setminus K_{\mu}$. 
Let~$\rho' \colon\pi_1(S^3\setminus \nu L')\map \GL(n,R)$ be a representation where~$R$ is a Noetherian UFD. Denote by~$\rho$ the representation~$\pi_1(S^3\setminus \nu L)\map\pi_1(S^3\setminus \nu L')\xrightarrow{\rho'} \GL(n,R)$.

The twisted Reidemeister torsion of~$(L,\rho)$ satisfies \[
    \displaystyle
    \tau^\rho_L(t_1\dd{,}t_{\mu-1},1)\doteq_\Delta \det(T\rho'(K_{\mu})-I_n)\cdot\tau^{\rho'}_{L'}(t_1\dd{,}t_{\mu-1}).\]
\end{customthm}

\begin{proof}
We begin by setting up some notation and by reformulating~$\tau^\rho_L(t_1\dd{,}t_{\mu-1},1)$ in a more convenient way.
    The inclusion~$\iota:X_L\hookrightarrow X_{L'}$ induces group epimorphisms on~$\pi_1$ and~$H_1$ that fit into the following commutative diagram: 
        \begin{equation}\label{commutative diagram}
            \begin{tikzcd}
            \pi_1(X_L) \arrow[r, two heads, "\iota_*"] \arrow[d, two heads, "\gamma"] & \pi_1(X_{L'}) \arrow[d, ,two heads, "\gamma'"] \\
            H_1(X_L) \arrow[r, two heads, "\iota_*"] & H_1(X_{L'}).
            \end{tikzcd}
            \begin{tikzcd}[row sep=-5pt]
            t_i \arrow[r, maps to] & t_i \\
            t_\mu \arrow[r, maps to] & 1    
            \end{tikzcd}
        \end{equation}
Here, for each~$i<\mu$, the variable~$t_i$ corresponds to the meridian of~$K_i$. 
By \eqref{commutative diagram} and because~$\rho \colon=\rho'\circ\iota_*$, we have: \begin{equation}\label{tensor-composition}
        (\rho'\otimes\gamma')\circ\iota_*=(\rho'\circ\iota_*)\otimes(\gamma'\circ\iota_*)=\rho\otimes(\iota_*\circ\gamma).
    \end{equation}
Note that~$H_1(\xl)$ and~$H_1(\xlpr)$ are free abelian groups and~$\iota_*\circ\gamma$ is non-trivial. 
We now reformulate~$\tau^\rho_L(t_1\dd{,}t_{\mu-1},1)$ as
        \begin{equation}\label{evalutation}
        \begin{split}
            \tau(X_L,(\rho'\otimes\gamma')\circ\iota_*) 
            &\doteq_\tau\tau(X_L,\rho\otimes(\iota_*\circ\gamma)) \quad\text{by \eqref{tensor-composition}} \\
            &\doteq_\tau \iota_*(\tau(X_L,\rho\otimes\gamma)) \quad\quad\text{by Lemma \ref{evalutation lemma} with }\psi=\gamma,\, \phi=\iota_*\\
            &\doteq_\tau\tau^\rho_L(t_1\dd{,}t_{\mu-1},1),
        \end{split}
        \end{equation}
    where in the last equality we used that the ring homomorphism \[\iota_*:(R[H_1(\xl)]\setminus \ker\iota_*)^{-1}R[H_1(\xl)]\map \widetilde{R[H_1(\xlpr)]}\]
    described in Lemma \ref{evalutation lemma} maps~$t_i$ to~$t_i$ for~$i<\mu$ but maps~$t_\mu$ to~$1$ as in \eqref{commutative diagram}.

One last piece of notation: in what follows, we use the following abbreviations for simplicity:
        \begin{align*}
            C_*(\xlpr)&:=C_*(\xlpr;\Gamma^n,\rho'\otimes\gamma') &
            H_*(\xlpr)&:=H_*(\xlpr;\Gamma^n,\rho'\otimes\gamma') \\
            C_*(\xlpr,\xl)&:=C_*(\xlpr,\xl;\Gamma^n,\rho'\otimes\gamma') &
            H_*(\xlpr,\xl)&:=H_*(\xlpr,\xl;\Gamma^n,\rho'\otimes\gamma') \\
            C_*(\xl)&:=C_*(\xl;\Gamma^n,(\rho'\otimes\gamma')\circ \iota_*) &
            H_*(\xl)&:=H_*(\xl;\Gamma^n,(\rho'\otimes\gamma')\circ \iota_*).
        \end{align*}
We are now ready to prove the twisted Torres formula.
    \begin{case}[$\det(T\rho'(K_{\mu})-I_n)= 0$]
        It is enough to show that~$\tau^\rho_L(t_1\dd{,}t_{\mu-1},1)=0$.
        By \eqref{evalutation}, we have left to show~$\GamTen C_*(X_L)$ is not acyclic which is equivalent to saying that~$H_i(X_L)$ is not~$\Gamma$-torsion for some~$i$ by Proposition \ref{tensor acyclicity-torsion}.
        
        As~$\xlpr$ has the homotopy type of a finite 2-dimensional CW complex,~$H_3(X_{L'})=0$ (see e.g.~\cite[Theorem 3.16]{friedl2024surveyfoundationsfourmanifoldtheory}). Using Lemma \ref{homology of pair},~$H_3(X_{L'},X_L)$ has positive rank and therefore the following part of the long exact sequence of the pair~$\xlpair$ shows that~$H_2(X_L)$ has a positive rank:\[
        \overbrace{H_3(\xlpr)}^{=0}\map H_3(X_{L'},X_L)\map H_2(X_L).
        \]
Thus, in this case, the Torres formula holds trivially: both sides of the equality vanish.
    \end{case}
    \begin{case}[$\det(T\rho'(K_{\mu})-I_n)\neq 0$]
 We assert that~$\GamTen C_*(X_L)$ is acyclic if and only if~$\GamTen C_*(\xlpr)$ is acyclic. 
Recall that~$H_i(\xl,
        \xlpr)$ is~$\Gamma$-torsion for all~$i$ from Lemma \ref{homology of pair}, equivalently, we have~$\GamTen H_i(\xl,\xlpr)=0$ for all~$i$.
By applying~$\GamTen -$ to the long exact sequence of homology~$\Gamma$-modules of the pair~$\xlpair$, we get that~$H_i(\xl)$ and~$H_i(\xlpr)$ have the same rank for all~$i$.
        By Proposition~\ref{tensor acyclicity-torsion}, the assertion holds.
%%%
\begin{subcase}[$\tau^{\rho'}_{L'}(t_1\dd{,}t_{\mu-1})=0$]
This condition is equivalent to~$\GamTen C_*(\xlpr)$ not being acyclic. Hence,~$\GamTen C_*(\xl)$ is not acyclic as well by the assertion which is equivalent to~$\tau^\rho_L(t_1\dd{,}t_{\mu-1},1)\doteq_\tau \tau(X_L,(\rho'\otimes\gamma')\circ\iota_*)=0$ where the first equality is \eqref{evalutation}.
Thus, in this case, the Torres formula holds trivially once again.
\end{subcase}
%%%%%
\begin{subcase}[$\tau^{\rho'}_{L'}(t_1\dd{,}t_{\mu-1})\neq 0$]
By the assertion, both~$\GamTen C_*(\xlpr)$ and~$\GamTen C_*(\xl)$ are acyclic. 
Choose cell structures on $X_{L'}$ and $X_L$ so that $(X_{L'},X_L)$ is a CW pair.
%, we get a relative CW structure on the pair $(X_L',X_L)$.
After picking the canonical basis for~$\widetilde{\Gamma}^n$, lifts, orientations and an ordering for the cells of~$X_{L'}$, we get bases~$\mathbf{c}_{X_{L'}},\mathbf{c}_{X_L}$ and~$\mathbf{c}_{(X_{L'},X_L)}$ for the twisted chain complexes~$\widetilde{\Gamma} \otimes_\Gamma C_*(X_{L'}),\widetilde{\Gamma} \otimes_\Gamma C_*(X_L)$ and~$\widetilde{\Gamma} \otimes_\Gamma C_*(X_{L'},X_L)$.
In the notation of~\cite[page 4]{Tura}, these bases satisfy~$\mathbf{c}_{X_{L'}} \sim \mathbf{c}_{X_L}\mathbf{c}_{(X_{L'},X_L)}$ and so the multiplicativity of Reidemeister torsion~\cite[Proposition 1.5]{Tura} ensures that
%By Proposition \ref{multiplication law of torsions}, we get that: 
%Writing (X_L',X_L) means that we picked CW structures so that the pair is a CW pair.
%We get a relative CW structure for (X_L',X_L)
%After picking the canonical basis for \widetilde{\Gamma}^n, and lifts and orientations for the central  one,  we get bases for the twisted chain complexes.
%These satisfy c\simeq c'c''
%So can apply Turaev and then to remove choice of the bases, write =_\tau.
\begin{equation}\label{torres equation}
\tau(\xlpr,\rho'\otimes\gamma')\doteq_\tau\tau(\xl,(\rho'\otimes\gamma')\circ \iota_*)\cdot\tau(\xlpair,\rho'\otimes\gamma').  \end{equation}
% In Proposition \ref{multiplication law of torsions}, the equality was up to sign but this sign is already contained in the indeterminacy of the twisted Reidemeister torsion.
We compute the torsion of the pair~$\xlpair$ as follows:
\begin{align*}
\tau(\xlpair,\rho'\otimes\gamma')
&\doteq_\tau
\tau(\GamTen C_*\xlpair)\\ &\doteq_\Delta\displaystyle\prod_{i=0}^{m} \Delta(H_i (C_*\xlpair))^{(-1)^{i+1}} &\text{by Proposition \ref{torsion-order}}\\ &\doteq_\Delta \frac{1}{\Delta(H_2(\xlpr,\xl))} &\text{by Lemma \ref{homology of pair}}\\
&\doteq_\Delta \frac{1}  {\det(T\rho'(K_{\mu})-I_n)}.
\end{align*}
The last equality uses that the square matrix~$T\rho'(K_{\mu})-I_n$ presents~$H_2(\xlpr,\xl)$.
            
We know that 
\begin{align*}
\tau(X_L,(\rho'\otimes\gamma')\circ\iota_*)&\doteq_\tau \tau^\rho_L(t_1\dd{,}t_{\mu-1},1)&\text{by } \eqref{evalutation}\\
\tau(\xlpr,\rho'\otimes\gamma')&\doteq_\tau \tau^{\rho'}_{L'}(t_1\dd{,}t_{\mu-1}) &\text{by Definition \ref{definition of the torsion of link}.}
\end{align*} 
Therefore, equation \eqref{torres equation} can be reformulated to 
\[\tau^{\rho'}_{L'}(t_1\dd{,}t_{\mu-1})\doteq_\Delta \tau^\rho_L(t_1\dd{,}t_{\mu-1},1) \cdot\frac{1}  {\det(T\rho'(K_{\mu})-I_n)}.\]
We get the desired result by multiplying by~$\det(T\rho'(K_{\mu})-I_n)$ on both sides.
\end{subcase}
\end{case}
\end{proof}

\begin{remark}\label{rmk}
Since~$\det(T\rho'(K_{\mu})-I_n)$ is the characteristic polynomial of the invertible matrix~$\rho'(K_{\mu})$, it follows  that~$\det(T\rho'(K_{\mu})-I_n)$ is an element of~$R[T]$ (the polynomial ring over~$R$ in the variable~$T$) of degree~$n$ whose constant term is~$(-1)^n$.
 %    We argue that~$\det(T\rho'(K_{\mu})-I_n)$ is an element of~$R[T]$ (the polynomial ring over~$R$ in the variable~$T$) of degree~$n$ whose constant term is~$(-1)^n$.

 % Each entry of the matrix~$T\rho'(K_{\mu})-I_n$ is a linear polynomial in~$R[T]$. 
 % Therefore, the determinant is a polynomial in~$R[T]$ and its degree is less than or equal to~$n$. 
 % Since~$\det\rho'(K_{\mu})\neq 0$,
 % %%because GL
 % the degree~$n$ coefficient is nonzero. 
 % Hence,~$\det(T\rho'(K_{\mu})-I_n)$ is a polynomial of degree~$n$. 
 % The constant term is~$\det(-I_n)=(-1)^n$.
 \end{remark}

We recover Morifuji's result (albeit with a less refined indeterminacy) as follows.
\begin{corollary}[\cite{Morifuji} Theorem 3.6]\label{mori}
    Let~$L$ and~$L'$ be two links as in Theorem \ref{thm:MainIntro}, let $\F$ be a field, let~$\rho'\colon \pi_1(S^3\setminus \nu L')\map  \SL(n,\F)$ be a representation, and consider the composition~$\rho \colon \pi_1(S^3\setminus \nu L)\map\pi_1(S^3\setminus \nu L')\xrightarrow{\rho'}  \SL(n,\F)$.
    
The twisted Reidemeister torsion of~$(L,\rho)$ satisfies \[
    \displaystyle
    \tau^\rho_L(t_1\dd{,}t_{\mu-1},1)\doteq_\Delta \left( T^n+\sum_{i=1}^{\mu-1}\epsilon_i T^i+(-1)^n\right)\cdot\tau^{\rho'}_{L'}(t_1\dd{,}t_{\mu-1})
    \] 
    % where\[
    % T:=\prod_{i=1}^{\mu-1}t_i^{\ell k(K_i,K_\mu)}
    % \]and
where~$\epsilon_i\in \F$ for each~$1\leq i\leq n-1$. 
\end{corollary}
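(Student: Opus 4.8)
The plan is to obtain Corollary~\ref{mori} as a direct consequence of Theorem~\ref{thm:MainIntro}, the only extra ingredient being an elementary computation of the polynomial $\det(T\rho'(K_\mu)-I_n)$ in the special case of an $\SL(n,\F)$-representation.

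First I would apply Theorem~\ref{thm:MainIntro} with $R=\F$ (any field is a Noetherian UFD), which yields
\[
\tau^\rho_L(t_1\dd{,}t_{\mu-1},1)\doteq_\Delta\det(T\rho'(K_\mu)-I_n)\cdot\tau^{\rho'}_{L'}(t_1\dd{,}t_{\mu-1}),
\]
with $\doteq_\Delta$ now meaning equality up to units of $\F[t_1^{\pm1}\dd{,}t_{\mu-1}^{\pm1}]$, that is up to $x\cdot t_1^{k_1}\cdots t_{\mu-1}^{k_{\mu-1}}$ with $x\in\F^\times$. Then I would examine $P(T):=\det(T\rho'(K_\mu)-I_n)\in\F[T]$. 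Writing $A:=\rho'(K_\mu)\in\SL(n,\F)$ and $\chi_A(x):=\det(xI_n-A)$ for its characteristic polynomial, the identity $P(T)=(-1)^n\det(I_n-TA)=(-1)^nT^n\chi_A(T^{-1})$, together with $\chi_A(0)=(-1)^n\det A=(-1)^n$, shows that
\[
P(T)=T^n+\sum_{i=1}^{n-1}\epsilon_iT^i+(-1)^n
\]
for suitable coefficients $\epsilon_i\in\F$; here the leading coefficient equals $\det A=1$ precisely because $\rho'$ takes values in $\SL(n,\F)$, while the constant term $(-1)^n$ and the degree $n$ are as already recorded in Remark~\ref{rmk}. (Equivalently, one can expand $\det(I_n-TA)=\prod_j(1-T\lambda_j)$ over $\overline{\F}$ in terms of the elementary symmetric functions of the eigenvalues $\lambda_j$ of $A$, using $\prod_j\lambda_j=\det A=1$.) Substituting this expression for $P(T)$ back into the displayed equality gives the statement of the corollary.

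I do not expect any genuine obstacle here: once Theorem~\ref{thm:MainIntro} is available, the corollary is pure linear algebra resting on the single observation $\det\rho'(K_\mu)=1$. The one point worth flagging — and already flagged in Remark~\ref{rem:Indeterminacy} — is that the indeterminacy obtained in this way, namely multiplication by $x\cdot t_1^{k_1}\cdots t_\mu^{k_\mu}$ with $x\in\F^\times$, is coarser than the $\pm t_1^{nk_1}\cdots t_\mu^{nk_\mu}$ appearing in Morifuji's original formulation, and I would not attempt to sharpen it in this proof.
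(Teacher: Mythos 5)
Your proposal is correct and follows exactly the paper's own route: apply Theorem~\ref{thm:MainIntro} with $R=\F$ and then observe that $\det(T\rho'(K_\mu)-I_n)$ is monic of degree $n$ with constant term $(-1)^n$ because $\det\rho'(K_\mu)=1$ (the paper cites Remark~\ref{rmk} for the latter two facts). Your version merely spells out the linear algebra in slightly more detail; no difference in substance.
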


\begin{proof}
    Since every field is a Noetherian UFD, we can apply Theorem \ref{thm:MainIntro}. Since~$\rho'\in  \SL(n,\F)$, the polynomial~$\det(T\rho'(K_{\mu})-I_n)$ is monic. We've shown that the constant term is~$(-1)^n$ in Remark~\ref{rmk}.
\end{proof}

% \newpage

% Eigenvalue remark

% Torsion of pair and or solid torus.

\bibliographystyle{alpha}
\bibliography{BiblioTorres}
\end{document}